\newtheorem{theorem}{Theorem}
\newtheorem{lemma}{Lemma}
\newtheorem{assumption}{Assumption}
\newtheorem{remark}{Remark}
\newtheorem{corollary}{Corollary}
\newcommand{\blue}[1]{\textcolor{black}{#1}}
\def\BibTeX{{\rm B\kern-.05em{\sc i\kern-.025em b}\kern-.08em
    T\kern-.1667em\lower.7ex\hbox{E}\kern-.125emX}}
\begin{document}


\title{\LARGE \bf Online Feedback Optimization over Networks: \\ A Distributed Model-free Approach\\
\thanks{
\textsuperscript{*}Automatic Control Laboratory, EPFL, Switzerland. Email: wenbin.wang@epfl.ch. \textsuperscript{$\dagger$}Automatic Control Laboratory, ETH Zurich, Switzerland. Email: \{zhiyhe, gbelgioioso, bsaverio, dorfler\}@ethz.ch. This work was supported by the SNSF via NCCR Automation (grant number 180545). Z.~He also acknowledges the support of the Max Planck ETH Center for Learning Systems.}
}
\newcommand{\obj}{\Tilde{\Phi}}
\newcommand{\gest}{\Tilde{\nabla}}
\newcommand{\tr}[1]{\text{tr}(#1)}
\newcommand{\E}[1]{\bb{E}[#1]}
\newcommand{\Ef}[2]{\bb{E}[#1|\mathcal{F}_{#2}]}
\newcommand{\N}{\frac{1}{N}\mathbf{1}\mathbf{1}^{\top}}
\newcommand{\inff}[1]{#1_{\infty}}
\newcommand{\bb}[1]{\mathbb{#1}}
\newcommand{\norm}[1]{\|#1\|}

\author{Wenbin Wang\textsuperscript{*}, Zhiyu He\textsuperscript{$\dagger$}, Giuseppe Belgioioso\textsuperscript{$\dagger$}, Saverio Bolognani\textsuperscript{$\dagger$}, and Florian D{\"o}rfler\textsuperscript{$\dagger$}
}

\maketitle

\begin{abstract}
Online feedback optimization (OFO) enables optimal steady-state operations of a physical system by employing an iterative optimization algorithm as a dynamic feedback controller. When the plant consists of several interconnected sub-systems, centralized implementations become impractical due to the heavy computational burden and the need to pre-compute system-wide sensitivities, which may not be easily accessible in practice. Motivated by these challenges, we develop a fully distributed model-free OFO controller, featuring consensus-based tracking of the global objective value and local iterative (projected) updates that use stochastic gradient estimates. We characterize how the closed-loop performance depends on the size of the network, the number of iterations, and the level of accuracy of consensus. Numerical simulations on a voltage control problem in a direct current power grid corroborate the theoretical findings.


\end{abstract}


\section{Introduction}
Optimal steady-state operations of dynamical systems are a frequent and important engineering task, with examples from power grids, traffic networks, to mobile robot swarms. These systems are inherently complex, preventing us from constructing accurate models of their behaviors. Moreover, exogenous disturbances that act on systems are unknown in general, which adds to the difficulty of formulating explicit input-output maps. Therefore, computing optimal steady-state control inputs offline via numerical solvers and then applying them to the system in a feedforward manner is often suboptimal, lacking real-time adaptability and efficiency.

Online feedback optimization (OFO) \cite{Hauswirth2021,simonetto2020time,krishnamoorthy2022real} is an emerging control paradigm to steer complex dynamical systems to optimal steady-state operating points. Its core principle is to interconnect optimization algorithms in closed loop with a physical plant, thus circumventing the need for an explicit model of the plant and the exact value of the disturbance. OFO enjoys closed-loop optimality and stability in various scenarios, e.g., convex \cite{simpsonporco2020, hauswirth2020antiwindup,Hauswirthadrian2021} and non-convex problems \cite{Haberleverena2021,Belgioioso2022,10354356} with input\cite{bianchin2021time} and output constraints\cite{bernstein2019online}, as well as linear \cite{lawrence2020linearconvex, Colombino2020} and non-linear dynamical systems\cite{Hauswirthadrian2021}. The effectiveness of OFO has been empirically demonstrated in real-world power distribution grids\cite{ortmann2023deployment}.


The above OFO controllers are mostly centralized. When applied to large-scale networked systems involving interconnected subsystems, they can face issues of scalability, robustness, and privacy disclosure, which are common in large-scale centralized decision architectures \cite{Nedicangelia2018}. First, a central processor is needed to communicate with all the subsystems and generate network-wide control inputs. Thus, there exist severe computational and communication bottlenecks for sizable networks. Second, this centralized architecture is not robust against potential failures or attacks at the central unit. Finally, since gradient-based OFO controllers (e.g., \cite{bianchin2021time,bernstein2019online,Hauswirth2021,Haberleverena2021}) require the system-wide input-output sensitivity to perform local iterative updates, concerns on privacy disclosure and local modeling also arise.


To tackle the issues of the centralized paradigm, multiple decentralized or distributed OFO controllers have been developed. Some decentralized methods\cite{wang2023decentralized,Belgioioso2022,belgioioso2023tutorial} leverage local decision-making without inter-agent communication. While these methods ensure convergence to competitive equilibria (e.g., Nash), they experience sub-optimality compared to the globally optimal solutions. Other distributed approaches exploit consensus-based information exchange for coordination among neighbors \cite{chang2019saddle,bernstein2019real,zhang2018distributed}. The aforementioned controllers all use first-order iterations, and, therefore, still require local input-output sensitivities of agents. In terms of large-scale complex systems, the estimation of those local sensitivities can be highly non-trivial, prone to errors, or prohibitive due to lack of measurements, nonlinearity, and non-smoothness of input-output maps. To address unknown sensitivities, another line of works \cite{10354356,poveda2017robust,chen2020model,chen2021safe} explore model-free OFO via stochastic gradient estimates developed in zeroth-order optimization \cite{Nesterov2017,Zhang2020,chen2022improve}. Nonetheless, how to extend these centralized model-free methods to achieve distributed decision-making remains unclear.

In this paper, we propose a distributed model-free OFO controller to optimize the steady-state performance of a networked nonlinear system. To circumvent local sensitivities and satisfy input constraints, we leverage local iterative updates based on gradient estimates and projection. More importantly, to achieve fully distributed operations, we design a new protocol for maintaining and updating a queue of historical local objective values, which facilitates each agent to locally track the global objective value and construct gradient estimates. Our work is aligned with \cite{Tang2020} that pursues cooperative optimization over networks. In contrast, we consider the input-output coupling between agents through dynamics and attain lower costs of local communication and storage, because we do not require a table that records the information of all the agents as \cite{Tang2020}. We characterize the optimality of the closed-loop interconnection between the proposed controller and the networked system. Specifically, we quantify the distance to the optimal point as a function of the size of the network, the number of iterations, and the accuracy of consensus. We further apply the proposed distributed controller to a voltage control problem for a direct current power system.

\section{Problem Formulation and Preliminaries}
Consider a networked system consisting of $N$ subsystems, or agents. The system is described by a graph $\mathcal{G}(\mathcal{N},\mathcal{E})$, where $\mathcal{N}$ and $\mathcal{E} \subseteq \mathcal{N} \times \mathcal{N}$ denote the set of agents and the set of edges, respectively. Agent $i$ can communicate with agent $j$ if and only if $(i,j) \in \mathcal{E}$, where $i,j \in \mathcal{N}$. The neighborhood of agent $i$ is represented by $\mathcal{N}_i$.
Given a vector $w \in \mathbb{R}^N$, $w_k$ denotes its value at the $k$-th iteration, while $w(i)$ represents its $i$-th element.

\subsection{Problem Formulation}
\blue{Let the steady-state input-output map of a networked system be given by a nonlinear function $h:\bb{R}^N \times \bb{R}^N \rightarrow \bb{R}^N$}
\begin{equation}\label{eq:steady-state-map}
    y=h(u,d),
\end{equation}
\blue{where $u \in \bb{R}^N$ is the input, $y \in \bb{R}^N$ is the output, $d\in \bb{R}^N$ is a constant unknown exogenous disturbance drawn from a certain distribution. The $i$-th elements of $u$, $y$, and $d$ are the local input, output, and disturbance of agent $i$, respectively, where $i=1,\ldots,N$.}
We focus on a per-agent scalar setting to simplify notation, however, our results readily extend to scenarios involving multiple variables.

We aim to optimize the steady-state input-output performance of the networked system \eqref{eq:steady-state-map}, i.e., 
\begin{subequations}
    \label{prob: introduction 2}
    \begin{align}
    \min_{u\in\bb{R}^N,y\in\bb{R}^N} \quad & \Phi(u,y) \triangleq \frac{1}{N}\sum_{i=1}^{N}\Phi_i(u(i),y(i)) \\
    \textrm{s.t.} \quad & y=h(u,d), \label{eq:ss-map} \\
    & u \in \mathcal{U} \triangleq {\textstyle \prod_{i=1}^{N}} \mathcal{U}_i.
    \end{align}
\end{subequations}
In problem~\eqref{prob: introduction 2}, the global objective function $\Phi: \bb{R}^N \times \bb{R}^N \rightarrow \bb{R}$ is the average of all the local objectives $\Phi_i: \bb{R} \times \bb{R} \rightarrow \bb{R}$, and $u(i)$ and $y(i)$ are the input and output of agent $i$, respectively. Moreover, $\mathcal{U}_i \subseteq \mathbb{R}$ is the local input constraint set of agent $i$. The overall constraint set $\mathcal{U}$ is the Cartesian product of all the local sets.
By incorporating \eqref{eq:ss-map} into the objective $\Phi(u,y)$, problem \eqref{prob: introduction 2} can be transformed to:
\begin{equation}
    \label{prob: introduction 3}
    \begin{split}
        \min_{u\in\bb{R}^N} \quad& \obj(u) \\
        \text{s.t.} \quad& u \in \mathcal{U},
    \end{split}
\end{equation}
where $\obj(u)=\frac{1}{N}\sum_{i=1}^{N}\obj_i(u)$, and $\tilde \Phi_i(u) \triangleq \Phi_i(u(i),y(i))$ is the reduced local objective. \blue{Also, $y(i)$ denotes the $i$-th element of the overall output $y=h(u,d)$.} The disturbance $d$ acts as an unknown parameter of $\obj(u)$. We make the following assumptions on the objectives and the network. 

\begin{assumption}
\label{assumption one}
    The reduced objective $\obj_i$ of agent $i$ is $L_0$-Lipschitz, $L_1$-smooth, and $m$-strongly convex.
\end{assumption}

\begin{assumption}
\label{assumption two}
    The graph $\mathcal{G}$ describing the networked system \eqref{eq:steady-state-map} is undirected and connected.
\end{assumption}

Both Assumptions \ref{assumption one} and \ref{assumption two} are standard in the literature of network control, see \cite{Maritan2023,shamir2015optimal, Tang2020} and \cite{Nedicangelia2018,Tang2019}, respectively. \blue{For instance, the assumption that $\obj_i$ is strongly convex can be satisfied when the system has linear dynamics and the objective $\Phi_i$ is a strongly convex function of $u(i)$ and $y(i)$.}

Solving problem \eqref{prob: introduction 2} with numerical optimization algorithms requires accurate knowledge of the static model \(h\) and the disturbance \(d\), which can be impractical in real-world applications
 \cite{Hauswirth2021}. In contrast, in this paper we aim to develop a fully-distributed OFO controller that leverages real-time output measurements $y$ to bypass the need for any model information related to the input-output map $h$.

\subsection{Gradient Estimation}
The key technique that enables model-free operations is zeroth-order optimization, where a gradient estimate, constructed based on objective values and random exploration vectors, is used in the iterative update. In this paper, we utilize the one-point residual feedback estimate proposed in \cite{Zhang2020}. This scheme proposes to estimate the gradient of a generic cost function $\zeta:\bb{R}^N\rightarrow\bb{R}$, evaluated at $u_k$, using
\begin{equation}
\label{eq: one-point residual estimation}
    \gest \zeta(u_{k}) = \frac{\zeta(u_k + \delta v_k) - \zeta(u_{k-1} + \delta v_{k-1})}{\delta} v_k,
\end{equation}
where $u_k,u_{k-1}\in \mathbb{R}^N$ are optimization variables, $\gest \zeta(u_k)$ is the gradient estimate, $\delta>0$ is the so-called smoothing parameter, and $v_k,v_{k-1}$ are independent and identically distributed random vectors sampled from a multivariate normal distribution. The expectation of \eqref{eq: one-point residual estimation} equals the gradient of the smooth approximation $\zeta_{\delta}$ of $\zeta$, where $\zeta_{\delta}(u):=\bb{E}_{v\sim\mathcal{N}(0,I)}[\zeta(u+\delta v)]$, and $I \in \mathbb{R}^{N \times N}$ is the identity matrix. If $\zeta$ is convex and $L$ smooth, then $\zeta_{\delta}$ is also convex and $L$ smooth. Moreover, the smooth approximation $\zeta_{\delta}$ can be bounded as $\zeta(u) \leq \zeta_{\delta}(u) \leq \zeta(u) + \frac{L}{2}\delta^2 N, \forall u \in \mathbb{R}^N$.

\section{Design of Distributed Model-free OFO Controllers}
The centralized model-free OFO \cite{10354356,chen2020model,chen2021safe} for solving \eqref{prob: introduction 3} requires a central processor to collect local objective values $\obj_i(u)$, evaluate the global objective to construct a gradient estimate via \eqref{eq: one-point residual estimation}, and then broadcast the updated solution. In practice, however, this centralized design faces challenges related to scalability, robustness, and privacy preservation\cite{Nedicangelia2018}.

In contrast, we pursue an OFO controller to regulate the system \eqref{eq:steady-state-map} in a distributed manner. This controller employs a novel communication protocol, where each agent stores and exchanges a vector of historical evaluations of local objectives for tracking the global objective value. This vector is updated via average consensus in the same time scale as the local optimization iterations. In the remainder of this paper, we treat the unconstrained case of \eqref{prob: introduction 3} (i.e., $\mathcal{U} = \mathbb{R}^N$) separately, to derive sharper results.



\subsection{Unconstrained Setting}

We first consider problem \eqref{prob: introduction 3} in the unconstrained scenario, i.e., $\mathcal{U} = \mathbb{R}^N$. The key idea of the proposed distributed OFO controller is to exchange and update local estimates of the global objective values, construct gradient estimates similar to \eqref{eq: one-point residual estimation}, and then iteratively adjust local control inputs.

\begin{algorithm}[!t]
    \renewcommand{\algorithmicrequire}{\textbf{Input:}}
    \renewcommand{\algorithmicensure}{\textbf{Output:}}
    \caption{Model-free distributed OFO controller}
    \label{algortihm: model-free controller}
    \begin{algorithmic}[1]
        \REQUIRE consensus parameter $\tau$, step size $\eta$, smoothing parameter $\delta$, $u_0 \in \bb{R}^N$, initial vector $z_{i,0}\in \bb{R}^{\tau}$ of each agent, total number of iterations $T$
        \FOR{$k=0, \ldots, T-1$}
        \FOR {every agent $i$}
            \STATE evaluate $\obj_i(u_k+\delta v_k) = \Phi_i(u_k(i)+\delta v_k(i),y_k(i))$;
            \STATE $\bm{z}_{i,k}(l) = \sum_{j \in \mathcal{N}_i}W_{ij}\bm{z}_{j,k}(l)$, $\forall l =1,\ldots,\tau$; \label{algo: consensus averaging}
            \STATE $\bm{z}^{(1)}_{i,k} = [\bm{z}^{\top}_{i,k},\obj_i(u_k+\delta v_k)]^{\top}$;\label{algo: adding new element}
            \STATE $u_{k+1}(i) = u_k(i) - \eta\frac{\Delta_k(i)}{\delta}$, where
            \begin{equation}\label{eq:Delta_cases}
            \Delta_k(i) \!=\!
            \begin{cases}
                \bm{z}^{(1)}_{i,k}(1)v^{\text{init}}_0(i)&\text{if $k\!=\! 0$},\\
                \big(\bm{z}^{(1)}_{i,k}(1)\!-\!\bm{z}^{(1)}_{i,k-1}(1)\big)v^{\text{init}}_{k}(i)&\text{if $0\!<\!k\!<\!\tau$},\\
                \big(\bm{z}^{(1)}_{i,k}(1)\!-\!\bm{z}^{(1)}_{i,k-1}(1)\big)v_{k-\tau}(i)&\text{if $k\!\geq\!\tau$};
            \end{cases}
            \end{equation}\label{algo: line 7}
            \STATE $\bm{z}_{i,k+1} = [\bm{z}^{(1)}_{i,k}(2),\hdots,\bm{z}^{(1)}_{i,k}(\tau+1)]^{\top}$;\label{algo: line 14}
        \ENDFOR
        \ENDFOR
        \ENSURE $u_T$
    \end{algorithmic}
\end{algorithm}

Let $\bm{z}_{i,k}$ be a vector containing the local objective values of agent $i$ at iteration $k$ and $\bm{z}_{i,k}(j)$ be the $j$-th entry of $\bm{z}_{i,k}$. The model-free distributed controller requires a suitable initialization of $\bm{z}_{i,0}\in \bb{R}^{\tau}$, where $\tau\in \mathbb{N}$ is a design parameter. For this, agent $i$ can evaluate the local objective $\tau$ times using $v^{\text{init}}_l(i) \sim \mathcal{N}(0,1)$ for $l =0,\hdots,\tau-1$ and stack the result in $\bm{z}_{i,0}$. Consequently, we have 
$$\bm{z}_{i,0} = \Big[\obj_i(u_0+\delta v^{\text{init}}_0),\hdots,\obj_i(u_0+\delta v^{\text{init}}_{\tau-1})\Big]^{\top}.$$

We summarize the detailed steps of the proposed distributed OFO controller in Algorithm \ref{algortihm: model-free controller}. \blue{The shared coefficients, such as $\tau, \eta$, and $\delta$, can be either set in advance or synchronized via consensus mechanisms.}
At every iteration $k$, each agent $i$ generates a random exploration signal $v_k(i) \sim \mathcal{N}(0,1)$ synchronously and evaluates its local objective value $\obj_i(u_k+\delta v_k) = \Phi_i(u_k(i)+\delta v_k(i),y_k(i))$ with its local input $u_k(i) + \delta v_k(i)$ and local measured output $y_k(i)$, which captures the collective effect of the coupled dynamics. These local objective values are stored in a vector $\bm{z}_{i,k}\in \bb{R}^{\tau}$ of size $\tau$. Then, the agents update $\bm{z}_{i,k}$ via the average consensus step (see line \ref{algo: consensus averaging} in Algorithm \ref{algortihm: model-free controller}), where $W$ is a doubly stochastic matrix with Metropolis weights\cite{Nedicangelia2018}. 
Afterward, the new local evaluation of $\obj_i$ queues into $\bm{z}_{i,k}$. We denote this vector with $\tau + 1$ elements by $\bm{z}^{(1)}_{i,k}$. The first element of $\bm{z}^{(1)}_{i,k}$ is taken out to facilitate the optimization iterations (see lines \ref{algo: line 7}-\ref{algo: line 14} in Algorithm \ref{algortihm: model-free controller}). We illustrate the process of communicating and updating $\bm{z}_{i,k}$ in Fig.~\ref{fig: distribute algo vector}.

In fact, when $k\geq \tau+1$, the first element of $\bm{z}^{(1)}_{i,k}$ corresponds to performing $\tau$ consensus iterations for the local objective values at time $k\!-\!\tau$. This element closely approximates the global objective value at time $k\!-\!\tau$ with a suitable $\tau$, since $\bm{z}^{(1)}_{i,k}(1)=\sum_{j=1}^NW^{\tau}_{ij} \obj_j(u_{k-\tau}+\delta v_{k-\tau})$. 

We write $\bm{Z}_k = [\bm{z}^{(1)}_{1,k}(1),\dots,\bm{z}^{(1)}_{N,k}(1)]^{\top}$, where $\bm{Z}_k\in \bb{R}^{N}$. For $k \geq \tau + 1$, the update for all the agents is
\begin{equation}
\label{algo: delayed zeroth-order}
u_{k+1} = u_{k} - \eta \frac{1}{\delta} \big(\underbrace{\bm{Z}_k}_{W^\tau\varphi_{k\!-\!\tau}}-\underbrace{\bm{Z}_{k-1}}_{W^\tau\varphi_{k\!-\!\tau\!-\!1}}\big)\odot v_{k-\tau},
\end{equation}
where $\varphi_{k-\tau} = [\obj_1(u_{k-\tau}+\delta v_{k-\tau}),\hdots,\obj_N(u_{k-\tau}+\delta v_{k-\tau})]^{\top}$ is of size $N$, and $x\odot y$ represents the Hadamard product between two vectors $x$ and $y$.

\begin{remark}
     The parameter $\tau$ regulates the trade-off between the accuracy of the gradient estimate and the communication burden. On the one hand, if $\tau$ is large, then $W^{\tau}$ approximates $\N$ well, where $\mathbf{1} \in \mathbb{R}^N$ is an all-ones vector. Thus, the distributed update \eqref{algo: delayed zeroth-order} owns a comparable accuracy guarantee as that of its centralized counterpart. On the other hand, a large $\tau$ indicates an increased amount of data exchanged between neighboring agents at each iteration. In practice, we can choose $\tau$ according to the bandwidth of the communication channel and the accuracy requirement.
\end{remark}

\begin{figure}[!t]
    \centering
    \includegraphics[width=0.9\linewidth]{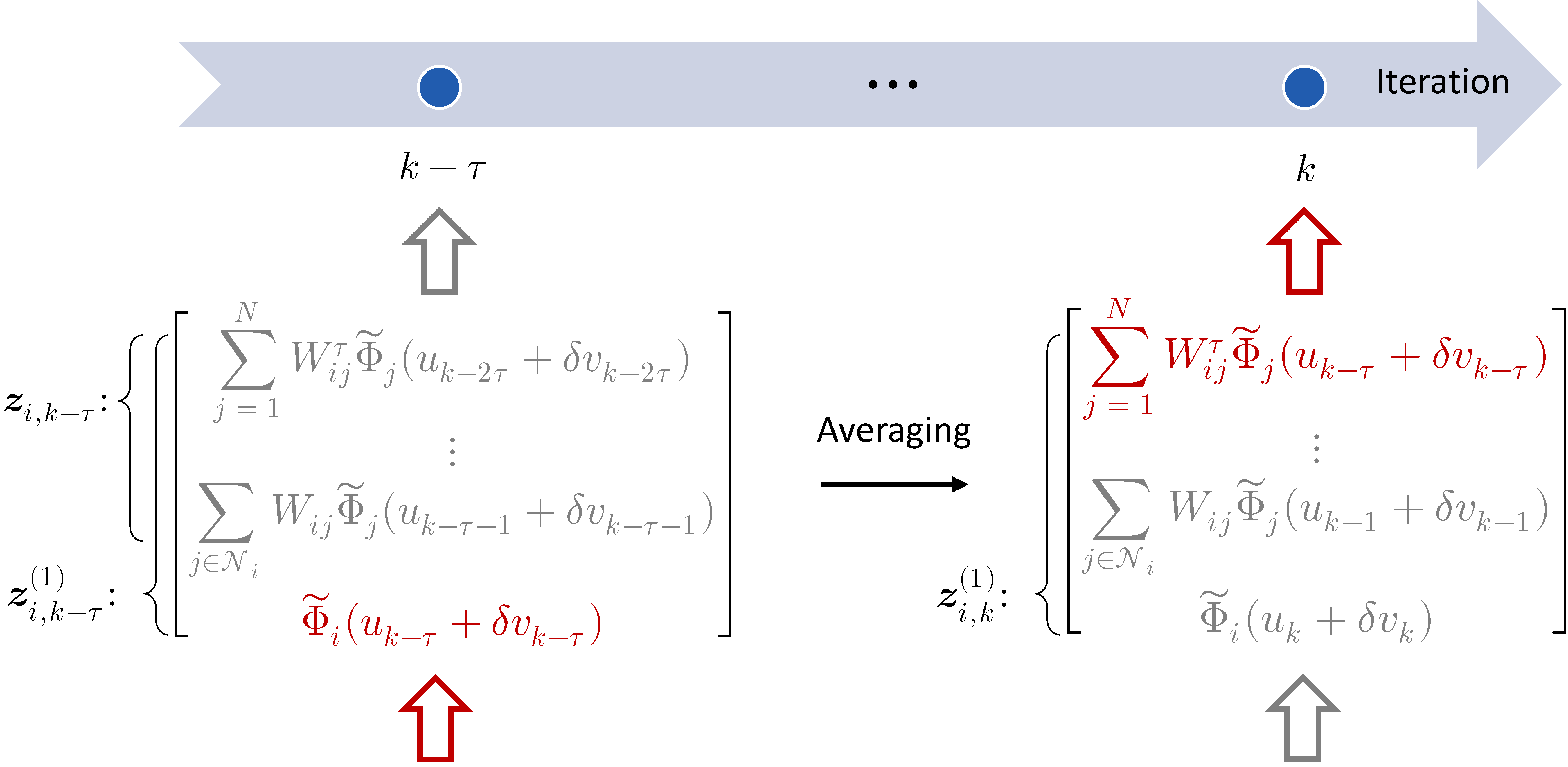}
    \caption{Illustration of the communication protocol.}
    \label{fig: distribute algo vector}
\end{figure}

\subsection{Constrained Setting}
In many applications, there exist constraints on the admissible input. Such constraints can arise, for instance, from the upper and lower bounds imposed by actuation limits. We make the following assumption on the local input constraint set $\mathcal{U}_i$, where $i=1,\ldots,N$. 
\begin{assumption}
\label{assumption three}
    The set $\mathcal{U}_i \subseteq \bb{R}$ is convex, closed, and bounded with a diameter $D_i$, i.e., $\forall u,u' \in \mathcal{U}_i, |u-u'| \leq D_i$.
\end{assumption}
Assumption~\ref{assumption three} is commonly used in the literature, e.g., \cite{Tang2020,Haberleverena2021,Dixit2018}, and satisfied in applications. It implies the overall constraint set $\mathcal{U} = \prod_{i=1}^{N} \mathcal{U}_i$ is bounded by  $D_\mathcal{U} \triangleq \sqrt{\sum_{i=1}^{N} D_i^2}$.

To handle the input constraint, we extend the proposed distributed controller \eqref{algo: delayed zeroth-order} with projection. Namely, every agent $i$ performs the following update:
\begin{equation}
    \label{algo: projected zeroth-order}
    u_{k+1}(i) = \text{Proj}_{\mathcal{U}_i}\Big[u_k(i) - \frac{\eta}{\delta}\Delta_k(i)\Big],
\end{equation}
where $\text{Proj}_{\mathcal{U}_i}[\cdot]$ denotes the projection onto $\mathcal{U}_i$, and $\Delta_k$ is defined as in \eqref{eq:Delta_cases}.


\section{Performance Guarantees}
We provide convergence guarantees when the proposed distributed model-free controller \eqref{algo: delayed zeroth-order} is implemented in the closed loop with system \eqref{eq:steady-state-map}. We use the expected distance to the unique solution $u^*$ to problem \eqref{prob: introduction 3}, i.e., $\bb{E}[\norm{u_k-u^*}^2]$, as the convergence measure. This metric is common in the literature when the objective is strongly convex and smooth \cite{Tang2020,Nedicangelia2018}.

\label{section: stability analysis zeroth order}
\subsection{Unconstrained Setting}
We first focus on the unconstrained scenario of problem \eqref{prob: introduction 3}, with $\mathcal{U} = \mathbb{R}^N$.
Let the centralized gradient estimate as per \eqref{eq: one-point residual estimation} and the consensus error be defined by
\begin{flalign*}
    \qquad&\gest \obj(u_{k-\tau})=\frac{1}{\delta}\N
    (\varphi_{k-\tau}-\varphi_{k-\tau-1})\odot v_{k-\tau},\\
    &e_{k-\tau} = \frac{1}{\delta}(W^{\tau}-\N)(\varphi_{k-\tau}-\varphi_{k-\tau-1})\odot v_{k-\tau}.&
\end{flalign*}
Therefore, the overall update \eqref{algo: delayed zeroth-order} can be transformed to
\begin{equation}
\label{eq: compact form}
    u_{k+1} = u_{k}- \eta(\tilde{\nabla}\tilde{\Phi}(u_{k-\tau}) + e_{k-\tau}),
\end{equation}
where $e_{k-\tau}$ represents the error relative to the centralized gradient estimate arising from a finite number of consensus iterations. We further provide upper bounds on the second moments of gradient estimates and errors.

\begin{lemma}
    \label{lemma 1}
    Let Assumptions \ref{assumption one} and \ref{assumption two} hold and $\mathcal{U} = \mathbb{R}^N$. If $0<\eta<\frac{\delta}{\sqrt{4NL_0^2\tr{W^{2\tau}}}}$, then for any $k>1$, $\E{\norm{\gest\obj(u_k)+e_k}^2}$, $\E{\norm{\gest \obj(u_k)}^2}$, and $\E{\norm{e_k}^2}$ are bounded from above by $R$, $R_f$, and $R_e$, respectively, where $R = \mathcal{O}(N^2)$, $R_f = \mathcal{O}(N^2)$, and $R_e = \mathcal{O}(N^2,\text{tr}[(W^{\tau}-\N)^{2}])$ are constants, which can be found in Appendix \ref{appendix: proof of technical results}.
\end{lemma}
\begin{proof}
    The proof can be found in Appendix \ref{appendix: proof of technical results}.
\end{proof}

Note that $R_e$ exhibits a polynomial dependence on the number of agents $N$. Furthermore, the term $\text{tr}[(W^{\tau}-\N)^{2}]$ describes how the consensus error decreases when $\tau$ increases. For $\eta$ and $\delta$ satisfying the conditions of Lemma~\ref{lemma 1}, we can choose $\tau$ such that the error related to $e_k$ is arbitrarily small.

Next, we characterize the optimality of the closed-loop interconnection of the proposed distributed controller \eqref{algo: delayed zeroth-order} and the system \eqref{eq:steady-state-map}.

\begin{theorem}
\label{theorem: delayed inexact gradient estimate}
    Given Assumptions \ref{assumption one} and \ref{assumption two}, if $\mathcal{U} = \mathbb{R}^N$, $m>1$, and $0<\eta<\frac{\delta}{\sqrt{4NL_0^2\tr{W^{2\tau}}}}$, then for any $k\geq \tau + 1$, the closed-loop system satisfies
    \begin{equation}\label{eq:upper_bound_unconstrained}
        \E{\|u_{k+1}-u^*\|^2} \leq \rho^{k-\tau}\E{\|u_{\tau+1}-u^*\|^2} + \frac{p(\eta)}{1-\rho},
    \end{equation}
    where $\rho \!=\! 1-(m-1)\eta + m\eta^2$, $p(\eta) = a_1\eta^3+a_2\eta^2+a_3\eta$, and $a_1,a_2,a_3$ are constants given by \eqref{bound on as} in Appendix~\ref{appendix: proof of theorem 1}.
\end{theorem}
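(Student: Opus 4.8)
The plan is to derive a one-step recursion of the form $a_{k+1}\le \rho\,a_{k}+p(\eta)$ for $a_{k}:=\E{\norm{u_k-u^*}^2}$ and then unroll it, since iterating this inequality from $k=\tau+1$ and summing the geometric series $\sum_{j\ge 0}\rho^{j}=1/(1-\rho)$ reproduces \eqref{eq:upper_bound_unconstrained} exactly. Starting from the compact update \eqref{eq: compact form}, I would expand
\[
\norm{u_{k+1}-u^*}^2=\norm{u_k-u^*}^2-2\eta\langle u_k-u^*,\gest\obj(u_{k-\tau})+e_{k-\tau}\rangle+\eta^2\norm{\gest\obj(u_{k-\tau})+e_{k-\tau}}^2,
\]
take total expectation, and bound the three pieces. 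The last term is immediately controlled by Lemma~\ref{lemma 1}, contributing $\eta^2 R$, which enters the $\eta^2$ part of $p(\eta)$.

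The crux is the cross term, and the delay is what makes it delicate. First I would condition on the filtration $\mathcal{F}_{k-\tau}$ generated by the exploration vectors up to index $k-\tau-1$. Because the update producing $u_k$ uses only $v_0,\dots,v_{k-\tau-1}$, the iterate $u_k$ is $\mathcal{F}_{k-\tau}$-measurable, while $\Ef{\gest\obj(u_{k-\tau})}{k-\tau}=\nabla\obj_{\delta}(u_{k-\tau})$ by the unbiasedness of the residual estimate \eqref{eq: one-point residual estimation} with respect to the smoothed objective $\obj_{\delta}$. The tower property then removes the stochastic part of the centralized estimate from the cross term, leaving $-2\eta\E{\langle u_k-u^*,\nabla\obj_{\delta}(u_{k-\tau})\rangle}$ plus the consensus-error contribution, which I would keep and bound directly via Cauchy--Schwarz and $\E{\norm{e_{k-\tau}}^2}\le R_e$.

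To turn the remaining inner product into a contraction anchored at the \emph{current} iterate, I would insert $\nabla\obj_{\delta}(u_k)$ and split $u_k-u^*=(u_k-u_{k-\tau})+(u_{k-\tau}-u^*)$. The term $\langle u_k-u^*,\nabla\obj_{\delta}(u_k)\rangle$ is handled by $m$-strong convexity of $\obj_{\delta}$ (inherited from Assumption~\ref{assumption one}), producing the negative $\Theta(\eta)\norm{u_k-u^*}^2$ that drives the contraction; the residual bias $\nabla\obj_{\delta}(u^*)$, nonzero because $u^*$ minimizes $\obj$ rather than $\obj_{\delta}$, is controlled by the standard smoothing estimates underlying $\obj(u)\le\obj_{\delta}(u)\le\obj(u)+\frac{L_1}{2}\delta^2 N$ and feeds only $\delta$-dependent constants into $p(\eta)$. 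The delay gap $\nabla\obj_{\delta}(u_{k-\tau})-\nabla\obj_{\delta}(u_k)$ is bounded through $L_1$-smoothness by $L_1\norm{u_{k-\tau}-u_k}$, and $\norm{u_{k-\tau}-u_k}^2$ is in turn written as a sum of $\tau$ increments, to which Cauchy--Schwarz over the $\tau$ terms and Lemma~\ref{lemma 1} are applied; this is where the factor $\tau^2$ and the higher powers of $\eta$ in $p(\eta)=a_1\eta^3+a_2\eta^2+a_3\eta$ originate.

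The main obstacle is the bookkeeping in recombining these pieces with Young's inequality. Because the strong-convexity contraction is anchored at $u_k$ but the bias term, the delay-gap term, and the consensus cross term each reintroduce $\norm{u_k-u^*}^2$ with positive $\eta$-coefficients, part of the contraction is spent balancing them; this is precisely what yields the factor $(m-1)$ in $\rho=1-(m-1)\eta+m\eta^2$ and forces the hypothesis $m>1$, while the second-order balancing of the error terms produces the $m\eta^2$ correction. The remaining $\delta$-, $\tau$-, $R$-, and $R_e$-dependent constants collect into the coefficients $a_1,a_2,a_3$. Finally, I would verify that the step-size bound $0<\eta<\delta/\sqrt{4NL_0^2\tr{W^{2\tau}}}$ together with $m>1$ guarantees $\rho\in(0,1)$, so that unrolling the recursion from $k=\tau+1$ converges and gives the stated steady-state term $p(\eta)/(1-\rho)$.
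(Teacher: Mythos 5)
Your proposal is correct and follows essentially the same route as the paper's proof: expand the squared distance under the compact update \eqref{eq: compact form}, control the quadratic and consensus-error terms with Lemma~\ref{lemma 1} and Young's inequality, use conditional unbiasedness with respect to $\mathcal{F}_{k-\tau}$ together with strong convexity and the smoothing bounds to extract the contraction, and telescope the resulting one-step recursion. The only cosmetic difference is that the paper anchors strong convexity at $u_{k-\tau}$ (its terms (a)--(b)) and converts back to $\E{\norm{u_k-u^*}^2}$ via a vector identity, whereas you anchor it at $u_k$ and pay an $L_1$-smoothness gap for the delayed gradient; both yield the same $\rho$ and a $p(\eta)$ of the stated form.
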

\begin{proof}
    The proof can be found in Appendix \ref{appendix: proof of theorem 1}.
\end{proof}

\begin{remark}
    Theorem~\ref{theorem: delayed inexact gradient estimate} requires that the strong convexity coefficient of $\tilde{\Phi}_i$ satisfies $m>1$. For those objectives that do not meet this requirement, we can scale them by some proper constants bigger than $1$, so that this condition is satisfied and the optimal point remains unchanged.
\end{remark}

\blue{The inequality \eqref{eq:upper_bound_unconstrained} suggests that the transient behavior of our distributed algorithm is captured by the convergence of $\E{\|u_{k+1}-u^*\|^2}$ with a linear rate $\rho$. Further, the asymptotic behavior reflecting the suboptimality of the limiting point is dominated by errors due to stochastic gradient estimation and consensus-based tracking mechanisms. We further show that the bound of suboptimality (i.e., $p(\eta)/(1-\rho)$) asymptotically converges to zero as $\tau$ goes to infinity.}

\begin{corollary}
\label{corollary one}
    Consider a sufficiently small $\epsilon > 0$. Let
    \begin{align*}
        \eta_{\epsilon} &= (\sqrt{(a_2\!+\!0.5m\epsilon)^2\!+\!(2a_1\!+\!32N^2L_1L_0^2\tr{W^{2\tau}})(m\!-\!1)\epsilon} \\
        &\quad -(a_2\!+\!0.5m\epsilon))/(2a_1\!+\!32N^2L_1L_0^2\tr{W^{2\tau}}), \\
        \delta &= 2\sqrt{4NL_0^2\tr{W^{2\tau}}}\eta_{\epsilon}.
    \end{align*}
    By choosing $\tau > \ln(\frac{(m-1-m\eta_{\epsilon})\epsilon}{2R_f(N-1)})/\ln(\lambda_2^2)$, we have $\frac{p(\eta_{\epsilon})}{1-\rho}<\epsilon$, where $\lambda_2$ is the second largest eigenvalue of $W$.
\end{corollary}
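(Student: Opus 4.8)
The plan is to show that the limiting error $p(\eta)/(1-\rho)$ appearing in Theorem~\ref{theorem: delayed inexact gradient estimate} can be forced below $\epsilon$ by splitting it into a \emph{smoothing-bias} part, tuned through the pair $(\eta_\epsilon,\delta)$, and a \emph{consensus-error} part, tuned through $\tau$. First I would put the limiting error in reduced form: since $1-\rho = (m-1)\eta - m\eta^2 = \eta\big((m-1)-m\eta\big)$, dividing $p(\eta)=a_1\eta^3+a_2\eta^2+a_3\eta$ by $1-\rho$ cancels one power of $\eta$ and gives
\[
\frac{p(\eta)}{1-\rho} = \frac{a_1\eta^2 + a_2\eta + a_3}{(m-1)-m\eta}.
\]
This is where the hypothesis $m>1$ is essential, as it is needed for $(m-1)-m\eta>0$ (equivalently $\rho<1$) on the relevant range $\eta<(m-1)/m$. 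Before proceeding I would check feasibility of the stated choices: with $\delta = 2\sqrt{4NL_0^2\tr{W^{2\tau}}}\,\eta_\epsilon$ one has $\eta_\epsilon < \delta/\sqrt{4NL_0^2\tr{W^{2\tau}}} = 2\eta_\epsilon$, so the step-size condition of Theorem~\ref{theorem: delayed inexact gradient estimate} and Lemma~\ref{lemma 1} is met, and I would verify that the positive root $\eta_\epsilon$ lies below $(m-1)/m$.

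The crux is the choice of $\delta$. From $\delta = 2\sqrt{4NL_0^2\tr{W^{2\tau}}}\,\eta_\epsilon$ we get $\delta^2 = 16NL_0^2\tr{W^{2\tau}}\eta_\epsilon^2$, which pins $\alpha=\tfrac14$ in Lemma~\ref{lemma 1} (so $R,R_f,R_e$ become genuine constants) and makes the $\delta^2$-scaled smoothing bias inside $a_3$ equal to $NL_1\delta^2 = 16N^2L_1L_0^2\tr{W^{2\tau}}\eta_\epsilon^2$. I would then isolate from $a_3$ this bias term together with the consensus term $R_e=\tr{(W^{\tau}-\N)^2}R_f$, i.e. $a_3 = 16N^2L_1L_0^2\tr{W^{2\tau}}\eta_\epsilon^2 + R_e$ up to the constants fixed in Appendix~\ref{appendix: proof of theorem 1}. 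By construction $\eta_\epsilon$ is the positive root of
\[
\big(a_1 + 16N^2L_1L_0^2\tr{W^{2\tau}}\big)\eta^2 + \big(a_2 + \tfrac{m\epsilon}{2}\big)\eta - \tfrac{(m-1)\epsilon}{2} = 0,
\]
so that $a_1\eta_\epsilon^2 + a_2\eta_\epsilon + 16N^2L_1L_0^2\tr{W^{2\tau}}\eta_\epsilon^2 = \tfrac{\epsilon}{2}\big((m-1)-m\eta_\epsilon\big)$. Substituting this into the reduced form, the numerator collapses and the smoothing-bias part is absorbed into an exact $\epsilon/2$, leaving $\frac{p(\eta_\epsilon)}{1-\rho} = \frac{\epsilon}{2} + \frac{R_e}{(m-1)-m\eta_\epsilon}$.

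It then remains to make the consensus remainder smaller than $\epsilon/2$. Using the spectral decomposition of the symmetric doubly stochastic $W$, I would bound $\tr{(W^{\tau}-\N)^2} = \sum_{i=2}^N \lambda_i^{2\tau} \le (N-1)\lambda_2^{2\tau}$, whence $R_e \le (N-1)\lambda_2^{2\tau}R_f$. Requiring $(N-1)\lambda_2^{2\tau}R_f < \tfrac{\epsilon}{2}\big((m-1)-m\eta_\epsilon\big)$ and solving for $\tau$ (noting $\ln(\lambda_2^2)<0$, so the inequality direction flips) yields exactly the stated threshold $\tau > \ln\!\big(\tfrac{(m-1-m\eta_\epsilon)\epsilon}{2R_f(N-1)}\big)/\ln(\lambda_2^2)$, giving $R_e/((m-1)-m\eta_\epsilon) < \epsilon/2$ and hence $p(\eta_\epsilon)/(1-\rho) < \epsilon$.

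The main obstacle I anticipate is the bookkeeping of the exact constants $a_1,a_2,a_3$ from the appendix and confirming the clean decomposition $a_3 = 16N^2L_1L_0^2\tr{W^{2\tau}}\eta_\epsilon^2 + R_e$; in particular one must check that $a_1$ and $a_2$ do not themselves depend on $\delta$, so that the implicit definition of $\eta_\epsilon$ is well posed once $\alpha$ is fixed at $\tfrac14$. One should also argue that the threshold on $\tau$ is attainable, which follows because $\tr{W^{2\tau}}\to 1$ keeps $\eta_\epsilon$ and $R_f$ at finite limits while $\lambda_2^{2\tau}\to 0$, so the right-hand side of the threshold stays bounded away from zero as $\tau$ grows.
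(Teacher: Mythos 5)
Your proposal is correct and takes essentially the same route as the paper's (much terser) proof: reduce $p(\eta)/(1-\rho)$ to $\frac{a_1\eta^2+a_2\eta+R_e+L_1\delta^2N}{(m-1)-m\eta}$, recognize $\eta_\epsilon$ as the positive root of the quadratic that makes the non-consensus part exactly $\epsilon/2$ after substituting $\delta^2=16NL_0^2\tr{W^{2\tau}}\eta_\epsilon^2$, and bound $R_e \le (N-1)\lambda_2^{2\tau}R_f$ to obtain the stated threshold on $\tau$. The circularity you flag --- that $a_2$ (through its $\delta^2L_1^2N$ term) and the constants $R,R_f$ themselves depend on $\delta$, making the definition of $\eta_\epsilon$ implicit --- is a genuine issue, but it is present in the paper's own statement and left unaddressed there as well.
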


\begin{proof}
    The proof can be found in Appendix \ref{appendix: proof of corollary}.
\end{proof}


\subsection{Constrained Setting}
Next, we consider the problem with an input constraint set, i.e., $\mathcal{U} \subset \mathbb{R}^N$. Similar to Theorem~\ref{theorem: delayed inexact gradient estimate}, the following theorem provides an upper bound on the expected distance to the optimal point when the projected controller \eqref{algo: projected zeroth-order} is interconnected in closed loop with the system \eqref{eq:steady-state-map}. 

\begin{theorem}
\label{theorem: projected dynamics}
    Let Assumptions \ref{assumption one}, \ref{assumption two}, and \ref{assumption three} hold. If $0< \eta<\frac{\delta}{\sqrt{4NL_0^2\tr{W^{2\tau}}}}$, the closed-loop interconnection attains the following upper bound 
    \begin{equation}\label{eq:dist_opt_constrained}
        \E{\|u_{k+1}-u^*\|}\leq {\rho'}^{k-\tau} \E{\|u_{\tau+1}-u^*\|}+\frac{\eta R'}{1-\rho'},
    \end{equation}
    where $\rho' \!=\! \sqrt{1\!-\!2m\eta\!+\!L_1^2\eta^2}$, and $R'$ is a constant given by
    \begin{equation*}
        R' = 2L_0 \!+\! 4L_0\sqrt{\text{tr}[W^{2\tau}](\frac{ND_{\mathcal{U}}}{\delta^2}\!+\! 4(N\!+\!4)^2)}.
    \end{equation*}
\end{theorem}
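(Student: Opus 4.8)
The plan is to exploit the non-expansiveness of the Euclidean projection together with the contraction of the exact strongly convex, smooth gradient map, treating the finite-difference/consensus estimate $g_k \triangleq \gest\obj(u_{k-\tau})+e_{k-\tau}$ used in \eqref{eq: compact form} as a perturbation that the compact domain keeps uniformly bounded. First I would write the projected update \eqref{algo: projected zeroth-order} for $k\ge\tau+1$ in the compact form $u_{k+1}=\text{Proj}_{\mathcal{U}}[u_k-\eta g_k]$. Since $u^*$ solves the constrained problem, it is a fixed point of the projected gradient map, $u^* = \text{Proj}_{\mathcal{U}}[u^*-\eta\nabla\obj(u^*)]$ for every $\eta>0$. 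Using that $\text{Proj}_{\mathcal{U}}$ is $1$-Lipschitz and inserting $\pm\eta\nabla\obj(u_k)$ yields the pointwise bound $\norm{u_{k+1}-u^*}\le \norm{(u_k-u^*)-\eta(\nabla\obj(u_k)-\nabla\obj(u^*))}+\eta\norm{g_k-\nabla\obj(u_k)}$.

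For the first (signal) term I would use that the average $\obj=\frac1N\sum_i\obj_i$ inherits $m$-strong convexity and $L_1$-smoothness from Assumption \ref{assumption one}. Expanding the square and applying $\langle u_k-u^*,\nabla\obj(u_k)-\nabla\obj(u^*)\rangle\ge m\norm{u_k-u^*}^2$ together with $\norm{\nabla\obj(u_k)-\nabla\obj(u^*)}\le L_1\norm{u_k-u^*}$ gives $\norm{(u_k-u^*)-\eta(\nabla\obj(u_k)-\nabla\obj(u^*))}\le\rho'\norm{u_k-u^*}$ with $\rho'=\sqrt{1-2m\eta+L_1^2\eta^2}$; this is a genuine contraction once $\eta$ is small enough that $\rho'<1$ (note $1-2m\eta+L_1^2\eta^2\ge0$ always, since $m\le L_1$).

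The crux is the second (noise) term, where I must show $\E{\norm{g_k-\nabla\obj(u_k)}}\le R'$ uniformly in $k$. I would split it as $\E{\norm{g_k}}+\E{\norm{\nabla\obj(u_k)}}$, bound the latter by $L_0$ via Lipschitzness, and control $\E{\norm{g_k}}\le\sqrt{\E{\norm{g_k}^2}}$ by Jensen. Writing $g_k=\tfrac1\delta W^\tau(\varphi_{k-\tau}-\varphi_{k-\tau-1})\odot v_{k-\tau}$ as in \eqref{algo: delayed zeroth-order}, the $L_0$-Lipschitzness of each $\obj_j$ bounds the entries of $\varphi_{k-\tau}-\varphi_{k-\tau-1}$ by $L_0(\norm{u_{k-\tau}-u_{k-\tau-1}}+\delta\norm{v_{k-\tau}}+\delta\norm{v_{k-\tau-1}})$, and crucially Assumption \ref{assumption three} gives $\norm{u_{k-\tau}-u_{k-\tau-1}}\le D_{\mathcal{U}}$ independently of $k$. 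This uniform bound is exactly what renders $R'$ a constant and is where the bounded domain enters; combining it with the standard Gaussian moment estimates for $\norm{v_{k-\tau}}$ (the source of the $(N+4)^2$ factors), the trace identity for $W^\tau$ (the source of $\tr{W^{2\tau}}$), and the step-size condition $0<\eta<\delta/\sqrt{4NL_0^2\tr{W^{2\tau}}}$ should reproduce $R'=2L_0+4L_0\sqrt{\tr{W^{2\tau}}(ND_{\mathcal{U}}/\delta^2+4(N+4)^2)}$.

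Assembling the two estimates and taking expectations gives the one-step recursion $\E{\norm{u_{k+1}-u^*}}\le\rho'\,\E{\norm{u_k-u^*}}+\eta R'$ for all $k\ge\tau+1$; unrolling from $k=\tau+1$ and summing $\sum_{i\ge0}\rho'^{i}=1/(1-\rho')$ yields \eqref{eq:dist_opt_constrained}. I expect the moment bound on $g_k$ to be the main obstacle: the Hadamard product entangles the finite-difference values $\varphi_{k-\tau}-\varphi_{k-\tau-1}$ (which themselves depend on the current exploration $v_{k-\tau}$) with $v_{k-\tau}$ and the consensus matrix $W^\tau$, so a careful conditioning argument and tight Gaussian fourth-moment estimates are required to extract the precise $\tr{W^{2\tau}}$, $N$, and $(N+4)^2$ dependence. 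By contrast, working with the unsquared distance $\E{\norm{u_{k+1}-u^*}}$ keeps the projection and triangle-inequality steps clean and lets the delay $\tau$ be absorbed entirely into the bounded noise constant $R'$ rather than propagating through the contraction.
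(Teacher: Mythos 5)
Your proposal is correct and follows essentially the same route as the paper: non-expansiveness of the projection plus the fixed-point property of $u^*$, the contraction factor $\rho'=\sqrt{1-2m\eta+L_1^2\eta^2}$ from strong convexity and smoothness, a uniform second-moment bound on the gradient estimate obtained by repeating the Lemma~\ref{lemma 1} computation with $\|u_k-u_{k-1}\|$ controlled by the diameter $D_{\mathcal{U}}$, and unrolling the resulting recursion. The only difference is cosmetic: you compare against $\nabla\obj(u_k)$ while the paper inserts both $\nabla\obj(u_k)$ and $\nabla\obj(u^*)$ (hence its factors of $2L_0$ and $4L_0$ in $R'$), which changes constants but not the argument.
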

\begin{proof}
    The proof can be found in Appendix \ref{appendix: proof of theorem 2}.
\end{proof}

With input constraints in place, the limiting bound on the distance to the optimal point (i.e., $\eta R'/(1-\rho')$) depends on the step size $\eta$ and the diameter of the feasible set $D_{\mathcal{U}}$. \blue{Compared to the unconstrained case, the distance between any two points in the feasible set is not more than $\mathcal{D_{\mathcal{U}}}$ in the constrained case, which eliminates the need for dealing with coupling transients in $u$. While the use of $\mathcal{D_{\mathcal{U}}}$ facilitates analysis, the upper bound \eqref{eq:dist_opt_constrained} tends to be more conservative than the bound \eqref{eq:upper_bound_unconstrained}.}

\section{Application in Distributed Voltage Control}
\label{Numerical Results}
Consider the tree-shaped direct current (DC) power system illustrated in Fig.~\ref{fig: chapter 2 8 node system}. The line is modeled through a resistor and an inductor. At each node, a droop controller (i.e., a capacitor and a resistor connected to the ground) is applied such that the system is stabilized for any current injection\cite{JinxinZhao}. 
The system dynamics is given as
\begin{equation}
\begin{split}
    \label{eq: power system formulation}
    \begin{bmatrix}
        C&0\\
        0&L
    \end{bmatrix}
    \begin{bmatrix}
        \dot{V}\\
        \dot{f}
    \end{bmatrix}=&
    \begin{bmatrix}
        G&-B\\
        B^\top&-R
    \end{bmatrix}
    \begin{bmatrix}
        V\\
        f
    \end{bmatrix}+
    \begin{bmatrix}
        I^* + I_c\\
        0
    \end{bmatrix},\\
    V_{\text{m}}=&V+d,
\end{split}
\end{equation}
where $V \in \bb{R}^8$ is the node voltage, $f\in \bb{R}^7$ is the line current, $I^*\in \bb{R}^8$ is the reference current injection, $I_c\in \bb{R}^8$ is controllable current injection at each node, and $V_{\text{m}} \in \bb{R}^8$ is the voltage measurement corrupted by an unknown constant noise $d\in \bb{R}^8$. We denote the capacitance and resistance of each node by the diagonal matrices $C\in\bb{R}^{8\times8}$ and $G\in\bb{R}^{8\times8}$, respectively. Moreover, the inductance and resistance of each line are represented by the diagonal matrices $L\in \bb{R}^{7\times7}$ and $R\in \bb{R}^{7\times7}$, respectively. The network structure of the power system is denoted by the incidence matrix $B\in \bb{R}^{8\times7}$.

When there is an unknown load change $\Delta I \in \mathbb{R}^8$ in $I^*$, we aim to control the voltage of the system \eqref{eq: power system formulation} close to a reference value $V_{\text{m, ref}}$ while minimizing the effort associated to controllable current injection. The problem is formulated as follows
\begin{equation}
    \label{prob: quadratic function DC power system}
    \begin{aligned}
    \min_{I_c,V_{\text{m}}} \quad & \frac{1}{2}(\norm{I_c}^2+\norm{V_{\text{m}}-V_{\text{m, ref}}}^2)\\
    \textrm{s.t.} \quad & V_{\text{m}}=H(I_c+I^*-\Delta I)+d,\\
    &I_c \in \mathcal{U},
    \end{aligned}
\end{equation}
where the input-output steady-state sensitivity matrix $H =
    \begin{bmatrix}
        I&0
    \end{bmatrix}
    \begin{bmatrix}
        G&-B\\
        B^\top&-R
    \end{bmatrix}^{-1}
    \begin{bmatrix}
        I\\
        0
    \end{bmatrix}\in \bb{R}^{8\times 8}$,
and $V_{\text{m, ref}}=HI^*+d$. Note that local agents do not know the dynamics model \eqref{eq: power system formulation} or the sensitivity matrix $H$. Instead, they utilize local output measurements and collaboratively solve problem \eqref{prob: quadratic function DC power system}. In simulations, the communication network is aligned with the physical structure, and $C$, $G$, and $L$ are set as identity matrices. Further, $I^*$ and $\Delta I$ are all-ones vectors, and the resistance $R$ of each line is $10$. We apply Euler's forward method to discretize the system \eqref{eq: power system formulation} by setting the discretization step size $\epsilon=0.1$. We set the step size $\eta= 0.001$ and smoothing parameter $\delta = 0.002$.




\begin{figure}[!t]
    \centering
    \includegraphics[width=0.8\linewidth]{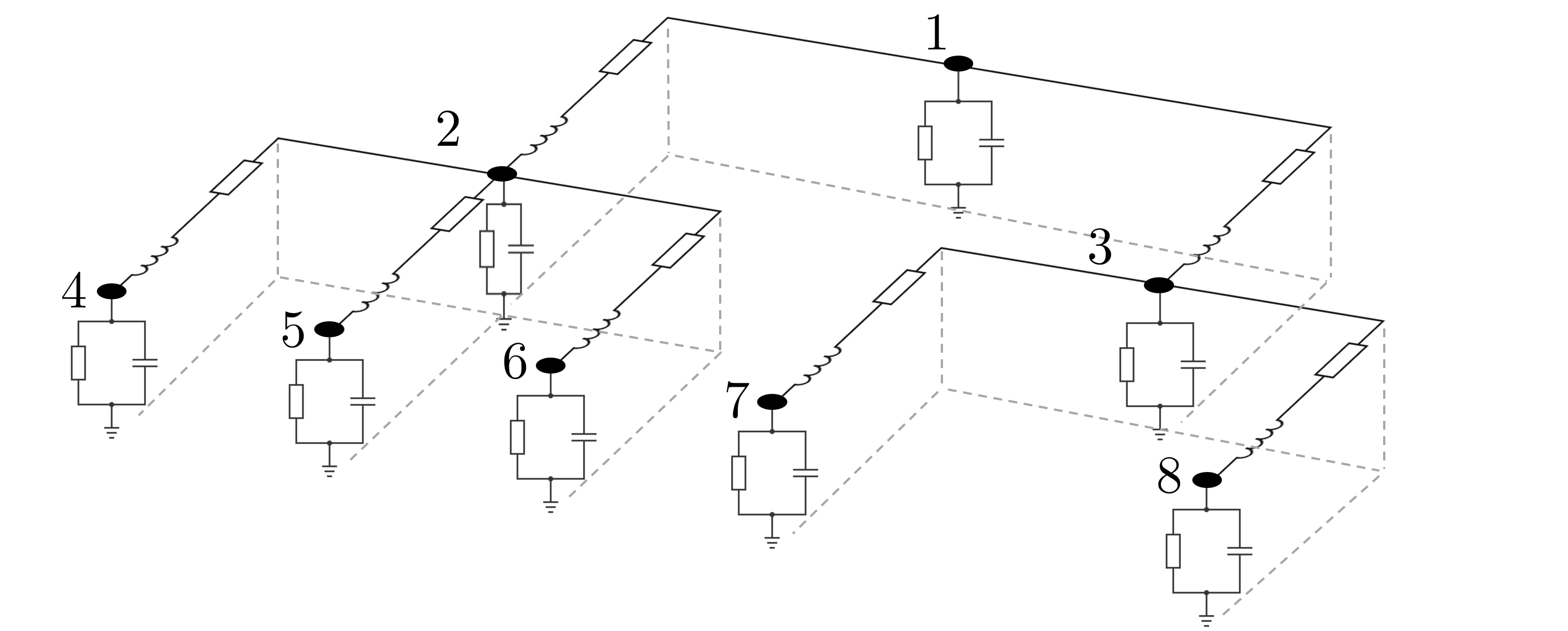}
    \caption{An 8-node DC power system.}
    \label{fig: chapter 2 8 node system}
\end{figure}

\begin{figure} 
    \centering
  \subfloat[Input error (unconstrained)\label{fig: unconstrained}]{%
       \includegraphics[width=0.5\linewidth]{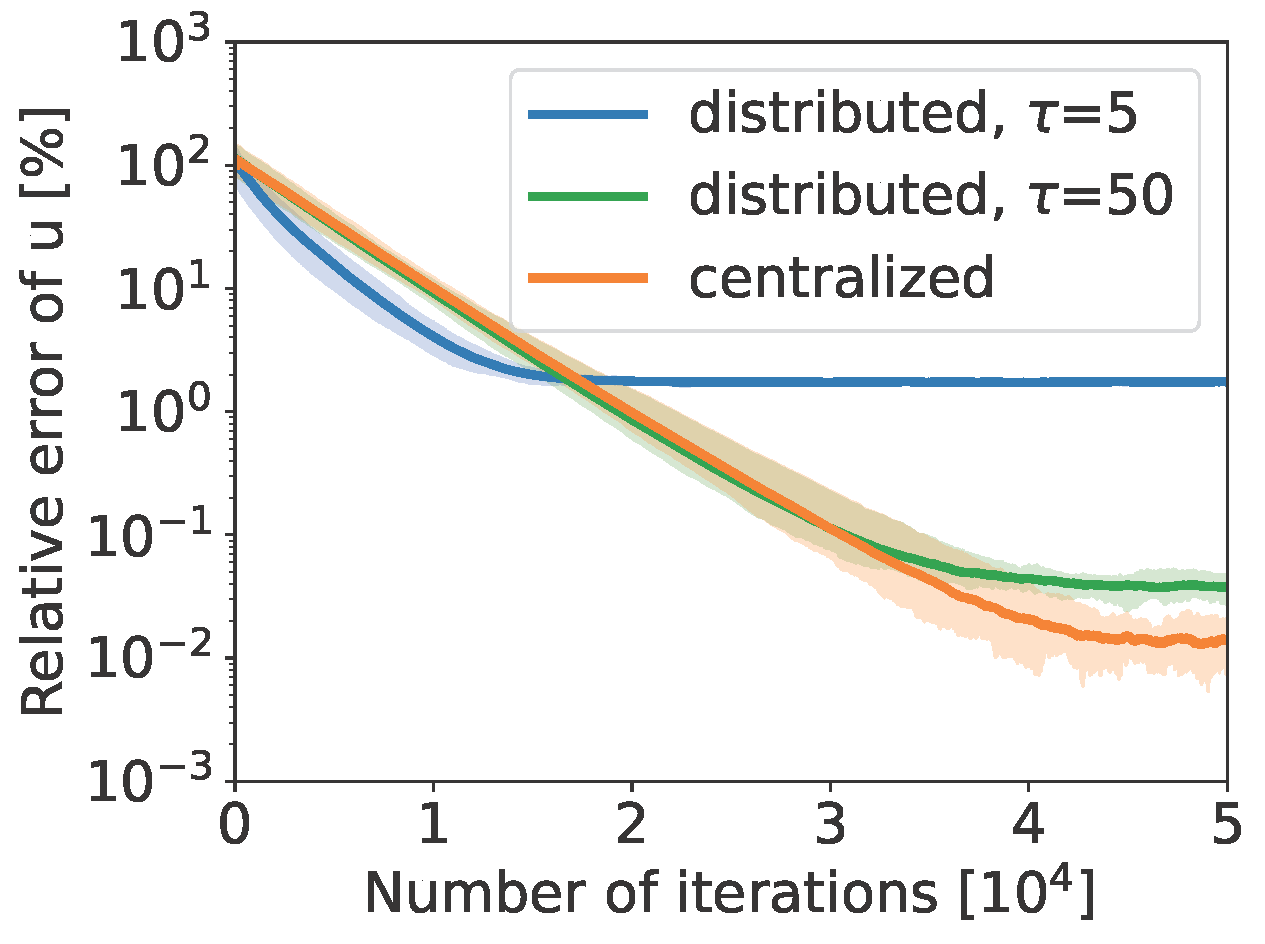}}
    \hfill
  \subfloat[Input error (constrained)\label{fig: constrained}]{%
        \includegraphics[width=0.5\linewidth]{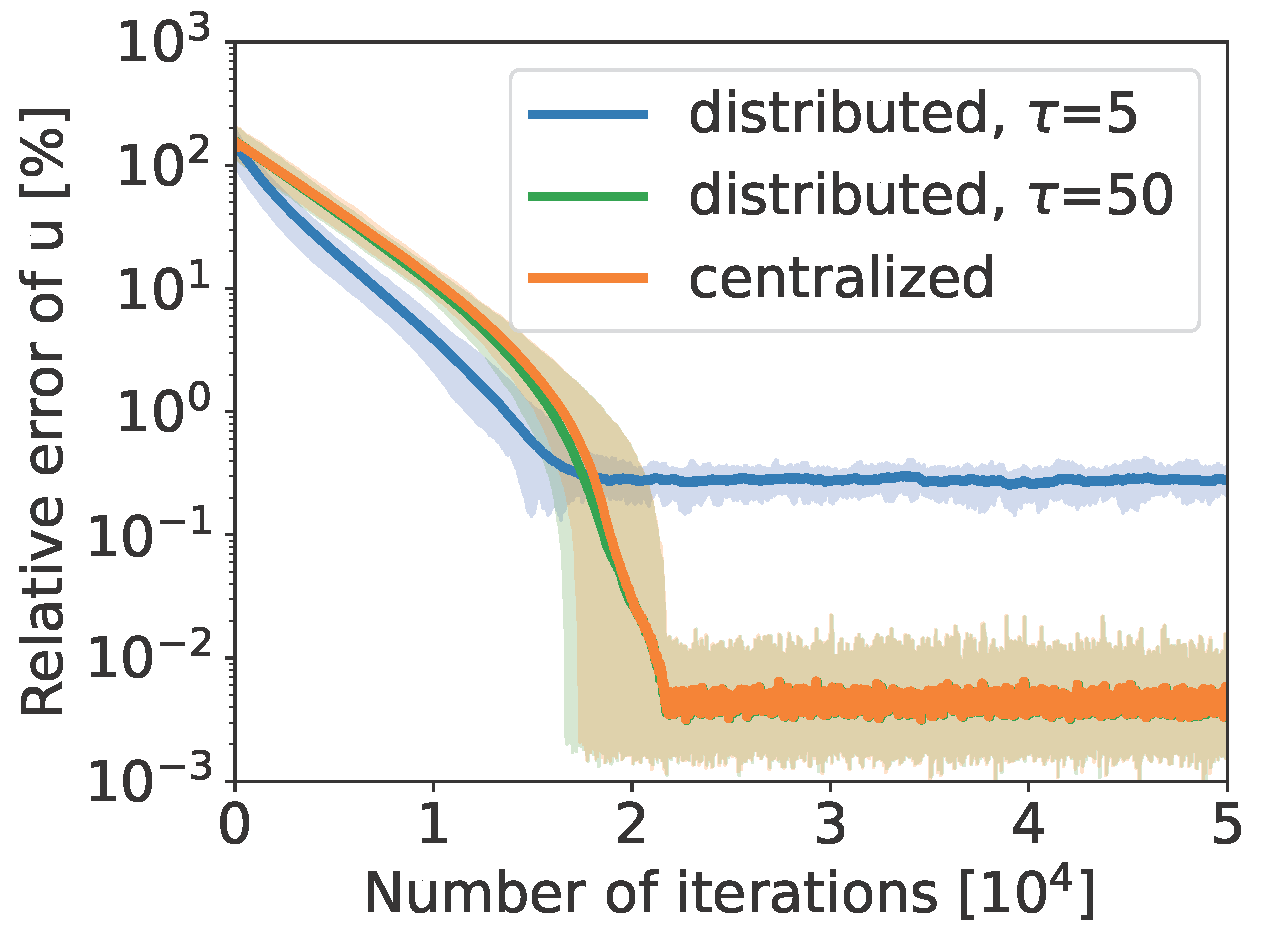}}\\
  \subfloat[Trajectory for constrained input $I_c(6)$ (averaged results)\label{fig: constrained_trajectory}]{%
        \includegraphics[width=1\linewidth]{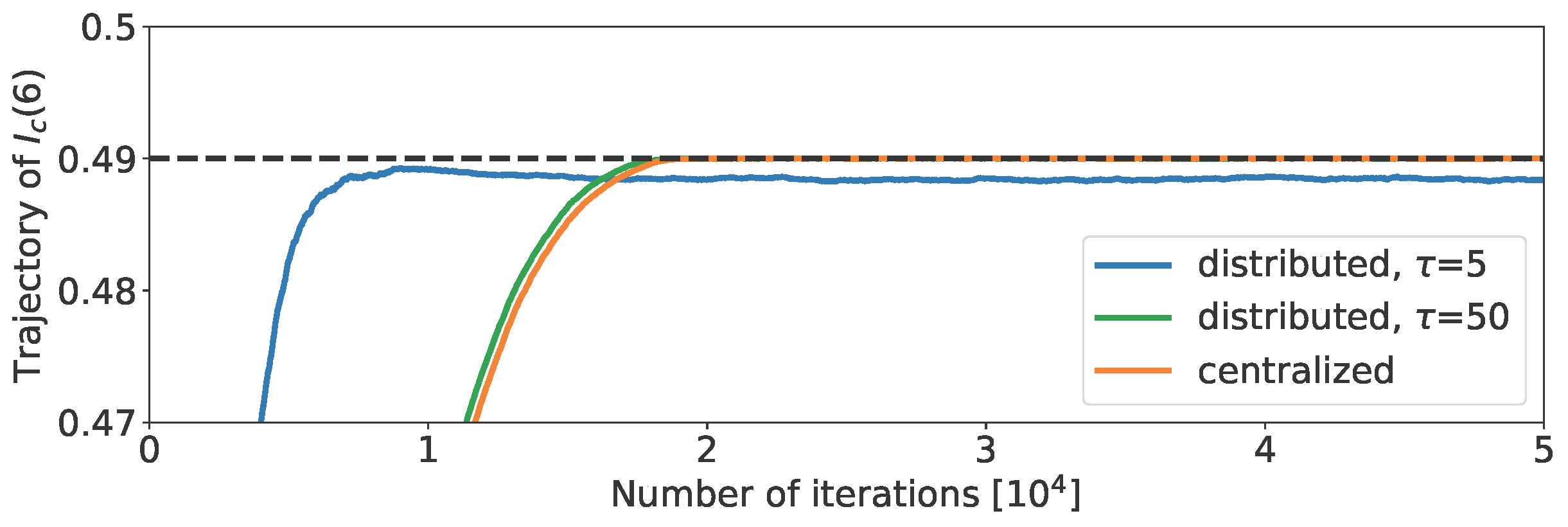}}
  \caption{Results for the distributed and centralized model-free controllers on a DC power system.}
  \label{fig: power system zeroth order}
\end{figure}

For the unconstrained problem, we plot the relative errors (i.e., $\norm{u_k-u^*}/\norm{u^*}$) in Fig. \ref{fig: unconstrained}.
The solid line represents the average value over $20$ independent numerical experiments. The blue curve and green curve correspond to the distributed model-free OFO controller \eqref{algo: delayed zeroth-order} with $\tau = 5$ and $\tau = 50$, respectively.
The orange curve represents the centralized counterpart (i.e., when $\tau$ is set to zero and $W^{\tau}$ is replaced by $\N$ in \eqref{algo: delayed zeroth-order}). They all own a linear convergence rate. 
The steady-state sub-optimality of the distributed controller \eqref{algo: delayed zeroth-order} results from the error of zeroth-order gradient estimates and the finite consensus iterations reflected in $W^{\tau}$. As $\tau$ increases from $5$ to $50$, the performance of the distributed controller approaches that of the centralized controller. 

For the scenario with input constraints, the convergence results are shown in Fig.~\ref{fig: constrained}. Similar to the unconstrained scenario, the distributed controller enjoys a fast convergence rate. The gap between the distributed and centralized controllers decreases as we increase $\tau$. Constraint satisfaction is illustrated by the averaged trajectory of the input $I_c(6)$ in Fig.~\ref{fig: constrained_trajectory}, where the black dashed line represents the upper bound on the input. By exchanging more data (i.e., increasing $\tau$), the distributed controller converges to the optimal point, which is at the boundary of the feasible set.



\section{Conclusion}

In this paper, we proposed a distributed online feedback optimization controller to optimize the steady-state performance of a networked nonlinear system. This controller features fully model-free operations, moderate costs of local computations and communication, and closed-loop guarantees on optimality and constraint satisfaction. These benefits were achieved via a consensus-based protocol that exploits a queue of local objective values to track the global objective, as well as iterative local updates involving gradient estimates and projection. Future directions include asynchronous implementations to handle delays or packet drops and extensions to other applications, e.g., traffic networks and process control systems.

\bibliographystyle{IEEEtran}
\bibliography{references}

\balance
\appendix

\subsection{Proof of a Supporting Lemma}
\label{appendix: proof of support lemma}
We first provide a useful lemma, which will be implemented in the following proofs. Let $\mathcal{F}_k$ be the $\sigma$-algebra (see \cite[Section 1.2]{ash2000probability}) on the set formed by the random signals $v_0, \ldots, v_{k-1}$. From \cite[Lemma 5]{Zhang2020}, we have
\begin{equation*}
    \Ef{\gest \obj(u_{k-\tau})}{k-\tau} = \nabla \obj_{\delta}(u_{k-\tau}).
\end{equation*}
We further define 
\begin{align*}
    \E{\norm{\gest \obj(u_{0})\! +\! e_{0}}^2}\!=\!\max_{p,q\in[\tau]} \{\E{\norm{\frac{1}{\delta} (W^{p}\varphi_0\!-\!W^{q}\varphi^{\text{init}}_0)\odot v_0}^2}\},
\end{align*}
where $\varphi^{\text{init}}_0 = [\obj_1(u_0+\delta v^{\text{init}}_0),\hdots,\obj_N(u_0+\delta v^{\text{init}}_0)]^{\top}$, $v^{\text{init}}_0$ and $v_0$ both follow $\mathcal{N}(0,I_{N\times N})$.

\begin{lemma}
\label{lemma: zeroth-order technical 1}
    Let $i,j \in \{1,\ldots,N\}$. 
    If $1<k< \tau + 2$,
    \begin{flalign*}
        \E{(\obj_j(&u_k+\delta v_k)-\obj_j(u_{k-1}+\delta v_{k-1}))^2v_{k}(i)^2}\\
        \leq&2L_0^2\eta^2\E{\|\gest\obj(u_0)+e_0\|^2} + 8L_0^2\delta^2(N+1).&
    \end{flalign*}
    If $k \geq \tau + 2$, 
    \begin{flalign*}
    \E{(\obj_j(&u_k+\delta v_k)-\obj_j(u_{k-1}+\delta v_{k-1}))^2v_{k}(i)^2}\\
    \leq&2L_0^2\eta^2\E{\|\gest\obj(u_{k-\tau-1})+e_{k-\tau-1}\|^2} + 8L_0^2\delta^2(N+1).&
    \end{flalign*}
\end{lemma}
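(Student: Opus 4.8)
The plan is to bound the quantity
$\E{(\obj_j(u_k+\delta v_k)-\obj_j(u_{k-1}+\delta v_{k-1}))^2v_{k}(i)^2}$
by separating the objective difference into two contributions: one coming from the displacement of the input $u_k - u_{k-1}$ between consecutive iterates, and one coming from the exploration terms $\delta v_k$ and $\delta v_{k-1}$. First I would invoke the $L_0$-Lipschitz property of $\obj_j$ (Assumption~\ref{assumption one}) to write
$|\obj_j(u_k+\delta v_k)-\obj_j(u_{k-1}+\delta v_{k-1})| \leq L_0 \norm{(u_k - u_{k-1}) + \delta(v_k - v_{k-1})}$,
then apply the elementary inequality $(a+b)^2 \leq 2a^2 + 2b^2$ to split the squared norm into a term involving $\norm{u_k-u_{k-1}}^2$ and a term involving $\delta^2\norm{v_k-v_{k-1}}^2$.

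Next I would handle the input-displacement term. By the compact update \eqref{eq: compact form}, the increment satisfies $u_k - u_{k-1} = -\eta(\gest\obj(u_{k-\tau-1}) + e_{k-\tau-1})$ in the regime $k \geq \tau+2$, so $\norm{u_k-u_{k-1}}^2 = \eta^2\norm{\gest\obj(u_{k-\tau-1}) + e_{k-\tau-1}}^2$, which directly produces the leading term $2L_0^2\eta^2\E{\|\gest\obj(u_{k-\tau-1})+e_{k-\tau-1}\|^2}$ after taking expectations. For the case $1<k<\tau+2$, the update increment is governed by the initialization phase (the first two branches of \eqref{eq:Delta_cases}), and by the definition of $\E{\norm{\gest\obj(u_0)+e_0}^2}$ given just before the lemma as the maximum over the relevant index combinations, the same Lipschitz-based argument yields $2L_0^2\eta^2\E{\|\gest\obj(u_0)+e_0\|^2}$; this is exactly why that maximum-over-indices definition was introduced.

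The remaining piece is the exploration term together with the dangling factor $v_k(i)^2$. After the split, I must bound $2L_0^2\delta^2\E{\norm{v_k-v_{k-1}}^2 v_k(i)^2}$. Here the key observations are that $v_k$ and $v_{k-1}$ are i.i.d.\ standard Gaussian and mutually independent, and that $v_{k-1}$ is independent of $v_k(i)$. Expanding $\norm{v_k-v_{k-1}}^2 = \norm{v_k}^2 - 2v_k^\top v_{k-1} + \norm{v_{k-1}}^2$ and multiplying by $v_k(i)^2$, the cross term vanishes in expectation because $\E{v_{k-1}} = 0$, and the remaining expectations reduce to Gaussian moment computations: $\E{\norm{v_{k-1}}^2 v_k(i)^2} = N$ by independence, while $\E{\norm{v_k}^2 v_k(i)^2}$ requires fourth moments of a standard Gaussian, giving $\E{v_k(i)^4} = 3$ for the diagonal entry and $1$ for each of the other $N-1$ entries, hence $N+2$. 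Summing yields $\E{\norm{v_k-v_{k-1}}^2 v_k(i)^2} = (N+2)+N = 2N+2 = 2(N+1)$, which produces the claimed $8L_0^2\delta^2(N+1)$ after multiplying by $2L_0^2\delta^2$.

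The main obstacle I anticipate is not the algebra but the careful handling of conditional independence and the measurability structure: one must be precise about which randomness is being integrated out at each stage, ensuring the cross term indeed annihilates and that the factor $v_k(i)^2$ correlates only with $v_k$ and not with $v_{k-1}$ or the earlier iterates encoded in $u_{k-\tau-1}$. I would organize this by conditioning on the filtration $\mathcal{F}_{k}$ so that $u_k, u_{k-1}$ (and thus the displacement term) are treated as deterministic relative to the fresh exploration moments of $v_k$, and then take total expectation; this cleanly decouples the two contributions and avoids spurious cross-correlations between the input-displacement bound and the Gaussian moment computation.
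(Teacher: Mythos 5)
Your proof is correct and follows essentially the same route as the paper: both split the objective difference via the $L_0$-Lipschitz property into an input-displacement term and a $\delta^2\|v_k-v_{k-1}\|^2$ exploration term, identify $u_k-u_{k-1}$ with $-\eta(\gest\obj(u_{k-\tau-1})+e_{k-\tau-1})$ for $k\geq\tau+2$ (and with the initialization-phase quantity covered by the max-over-indices definition of $\E{\|\gest\obj(u_0)+e_0\|^2}$ otherwise), and control the Gaussian moments by conditioning on $\mathcal{F}_k$. One arithmetic note: your exact computation gives $\E{\|v_k-v_{k-1}\|^2v_k(i)^2}=2(N+1)$, so multiplying by $2L_0^2\delta^2$ yields $4L_0^2\delta^2(N+1)$ rather than the $8L_0^2\delta^2(N+1)$ you state --- this is \emph{tighter} than the lemma's claim (the paper reaches $8L_0^2\delta^2(N+1)$ via the cruder bound $\|v_k-v_{k-1}\|^2\leq 2\|v_k\|^2+2\|v_{k-1}\|^2$), so the stated inequality still follows.
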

\begin{proof}
    The upper bound is derived as follows
    \begin{flalign*}
        \E{(&\obj_j(u_k+\delta v_k)-\obj_j(u_{k-1}+\delta v_{k-1}))^2v_{k}(i)^2}\\
        = & \E{\Ef{(\obj_j(u_k+\delta v_k)-\obj_j(u_{k-1}+\delta v_k)\\
        &+\obj_j(u_{k-1}+\delta v_k)-\obj_j(u_{k-1}+\delta v_{k-1}))^2v_{k}(i)^2}{k}}\\
        \leq & 2L_0^2\E{\Ef{\|u_k-u_{k-1}\|^2v_{k}(i)^2}{k}}\\
        &+2L_0^2\delta^2\E{\Ef{\|v_k-v_{k-1}\|^2v_{k}(i)^2}{k}}\\
        \leq & 2L_0^2\E{\Ef{\|u_k-u_{k-1}\|^2v_{k}(i)^2}{k}}+8L_0^2\delta^2(N+1).&
    \end{flalign*}
    If $1<k< \tau + 2$,
    \begin{flalign*}
        \E{(&\obj_j(u_k+\delta v_k)-\obj_j(u_{k-1}+\delta v_{k-1}))^2v_{k}(i)^2}\\
        \leq& 2L_0^2\eta^2\E{\|\gest\obj(u_0)+e_0\|^2}+8L_0^2\delta^2(N+1).&
    \end{flalign*}
    If $k \geq \tau + 2$,
    \begin{flalign*}
        \E{(&\obj_j(u_k+\delta v_k)-\obj_j(u_{k-1}+\delta v_{k-1}))^2v_{k}(i)^2}\\
        \leq& 2L_0^2\eta^2\E{\|\gest\obj(u_{k\!-\!\tau\!-\!1})\!+\!e_{k\!-\!\tau\!-\!1}\|^2}\!+\!8L_0^2\delta^2(N\!+\!1).&\qedhere
    \end{flalign*} 
\end{proof}

\subsection{Proof of Lemma \ref{lemma 1}}
\label{appendix: proof of technical results}

\begin{proof}
We first provide an upper bound on $\E{\|e_k\|^2}$
\begin{flalign*}
    \E{\|&e_k\|^2}\\
    =&\frac{1}{\delta^2}\bb{E}\Big[\bb{E}\Big[\sum_{i=1}^{N}v_{k}(i)^2\Big(\sum_{j=1}^{N}(\Delta W)_{ij}(\varphi_{j,k}-\varphi_{j,k-1})\Big)^2|\mathcal{F}_k\Big]\Big]\\
    \leq &\frac{N}{\delta^2}\bb{E}\Big[\bb{E}\Big[\sum_{i=1}^{N}v_{k}(i)^2\Big(\sum_{j=1}^{N}(\Delta W)_{ij}^2(\varphi_{j,k}-\varphi_{j,k-1})^2\Big)|\mathcal{F}_k\Big]\Big],&
\end{flalign*}
where $\Delta W= W^{\tau}-\N$.
From Lemma \ref{lemma: zeroth-order technical 1}, if $1 < k < \tau + 2$,
\begin{flalign*}
    \E{\|e_k&\|^2}\\
    \leq&\frac{N}{\delta^2}\left(2L_0^2\eta^2\E{\|\gest\obj(u_0)+e_0\|^2}+8L_0^2\delta^2(N+1)\right)\\
    &\cdot\sum_{i=1}^{N}\sum_{j=1}^{N}(W^{\tau}-\N)_{ij}^2\\
    \leq&\frac{2N}{\delta^2}L_0^2\eta^2\text{tr}[(W^{\tau}-\N)^{2}]\E{\|\gest\obj(u_0)+e_0\|^2}\\
    + &8L_0^2\text{tr}[(W^{\tau}-\N)^{2}](N+4)^2.&
\end{flalign*}
When $k \geq \tau + 2$,
\begin{flalign*}
    \E{\|e_k&\|^2}\\
    \leq& \frac{2N}{\delta^2}L_0^2\eta^2\text{tr}[(W^{\tau}\!-\!\N)^{2}]\E{\|\gest\obj(u_{k\!-\!\tau\!-\!1})\!+\!e_{k\!-\!\tau\!-\!1}\|^2}\\
    + &8L_0^2\text{tr}[(W^{\tau}-\N)^{2}](N+4)^2.&
\end{flalign*}
For $\E{\norm{\gest\obj(u_k)}^2}$, by
following a similar reasoning as the proof for Lemma \ref{lemma: zeroth-order technical 1}, we have
\begin{equation*}
    \E{\norm{\gest\obj(u_k)}^2}\leq\frac{2N}{\delta^2}L_0^2\E{\Ef{\norm{u_k-u_{k-1}}^2}{k}}+8L_0^2(N+4)^2.
\end{equation*}
If $1 < k < \tau + 2$,
\begin{flalign*}
    \E{\norm{\gest\obj(u_k)}^2}\leq&\frac{2N}{\delta^2}L_0^2\eta^2\E{\|\gest\obj(u_0)+e_0\|^2}+8L_0^2(N+4)^2.
\end{flalign*}
If $k \geq \tau + 2$,
\begin{flalign*}
    \E{\norm{\gest\obj(u_k)}^2}\leq&\frac{2N}{\delta^2}L_0^2\eta^2\E{\|\gest\obj(u_{k-\tau -1})+e_{k-\tau -1}\|^2}\\
    &+8L_0^2(N+4)^2.&
\end{flalign*}
For $\E{\|\gest\obj(u_k)+e_k\|^2}$, when $1<k < \tau + 2$, we can establish an upper bound as follows
\begin{flalign*}
    \E{\|&\gest\obj(u_k)+e_k\|^2}\leq2\E{\|\gest\obj(u_k)\|^2] + 2\bb{E}[\|e_k\|^2}\\
    \leq&2(\frac{2N}{\delta^2}L_0^2\eta^2\E{\|\gest\obj(u_0)+e_0\|^2} + 8L_0^2(N+4)^2)\\
    &+2(\frac{2N}{\delta^2}L_0^2\eta^2\text{tr}[(W^{\tau}-\N)^{2}]\E{\|\gest\obj(u_0)+e_0\|^2]\\
    &+ 8L_0^2\text{tr}[(W^{\tau}-\N)^{2}}(N+4)^2)\\
    =&\frac{4N}{\delta^2}L_0^2\eta^2\text{tr}[W^{2\tau}]\bb{E}[\|\gest\obj(u_0)\!+\!e_0\|^2]\!+ \!16L_0^2\text{tr}[W^{2\tau}](N\!+\!4)^2\\
    \leq&\frac{1}{\alpha}\E{\|\gest\obj(u_0)+e_0\|^2}+ 16L_0^2\text{tr}[W^{2\tau}](N+4)^2\frac{1}{1-\alpha},&
\end{flalign*}
where $\alpha = \frac{4N}{\delta^2}L_0^2\eta^2\text{tr}[W^{2\tau}]$, and the parametric conditions ensure that $\alpha < 1$.When $k \geq \tau + 2$, we similarly have
\begin{flalign*}
    \E{\|\gest&\obj(u_k)+e_k\|^2}\\
    \leq&\frac{4N}{\delta^2}L_0^2\eta^2\text{tr}[W^{2\tau}]\cdot\E{\|\gest\obj(u_{k-\tau-1})+e_{k-\tau-1}\|^2}\\
    &+ 16L_0^2\text{tr}[W^{2\tau}](N+4)^2\\
    \overset{\text{(s.1)}}{\leq}&\frac{1}{\alpha}\E{\|\gest\obj(u_0)+e_0\|^2}+ 16L_0^2\text{tr}[W^{2\tau}](N+4)^2\frac{1}{1-\alpha}.&
\end{flalign*}
The right-hand side of (s.1) is $R$ in Lemma~\ref{lemma 1}.

    For $\E{\|\gest\obj(u_k)\|^2}$, we have
    \begin{flalign*}
        \E{\|\gest\obj(u_k)\|^2}\leq&\frac{2N}{\delta^2}L_0^2\eta^2R + 8L_0^2(N+4)^2\\
        \overset{\text{(s.1)}}{=} &\frac{\alpha R}{2\text{tr}[W^{2\tau}]} + 8L_0^2(N+4)^2.&
    \end{flalign*}
    The right-hand side of (s.1) is $R_f$ in Lemma~\ref{lemma 1}.
    
    For $\E{\|e_k\|^2}$, we have
    \begin{flalign*}
        \E{\|e_k\|^2}\leq&\frac{2N}{\delta^2}L_0^2\eta^2\text{tr}[(W^{\tau}-\N)^{2}]R\\
        &+8L_0^2\text{tr}[(W^{\tau}-\N)^{2}](N+4)^2\\
        \overset{\text{(s.1)}}{=}&\text{tr}[(W^{\tau}-\N)^{2}]R_f.&
    \end{flalign*}
    The right-hand side of (s.1) is $R_e$ in Lemma~\ref{lemma 1}. 
\end{proof}

\subsection{Proof for Theorem \ref{theorem: delayed inexact gradient estimate}}
\label{appendix: proof of theorem 1}

\begin{proof}[Proof]
For $k \geq \tau + 1$, we can bound $\E{\norm{u_{k+1}-u^*}^2}$ via
    \begin{align*}
        \E{\norm{&u_{k+1}-u^*}^2}\\
        =&\E{\norm{u_k - u^* - \eta(\gest \obj (u_{k-\tau}) + e_{k-\tau})}^2}\\
        =&\E{\|u_k - u^*\|^2 - 2\eta(\gest \obj (u_{k-\tau})+e_{k-\tau})^{\top}(u_k-u^*)\\
        &\quad + \eta^2\|\gest \obj (u_{k-\tau}) + e_{k-\tau}\|^2}\\
        \leq&\E{\|u_k \!-\! u^*\|^2 - 2\eta\gest \obj (u_{k-\tau})^{\top}(u_k-u^*)\\
        &\quad+ \eta \|e_{k-\tau}\|^2  + \eta\|u_k-u^*\|^2+ \eta^2\|\gest \obj (u_{k-\tau}) + e_{k-\tau}\|^2}.
    \end{align*}
    We use Lemma \ref{lemma 1} to obtain
    \begin{flalign*}
        \E{&\norm{u_{k+1}-u^*}^2}\leq\E{(1+\eta)\|u_k - u^*\|^2}\\
        &- \E{2\eta\gest \obj (u_{k-\tau})^{\top}(u_k-u^*)} + \eta R_e + \eta^2R\\
        =&\E{(1+\eta)\|u_k - u^*\|^2}+ \eta R_e + \eta^2R\\
        &- 2\eta\underbrace{\E{ \gest \obj (u_{k-\tau})^{\top}(u_{k-\tau}-u^*)}}_{\text{(a)}}\\
        &-2\eta\underbrace{\E{ \nabla \obj (u_k)^{\top}(u_k-u_{k-\tau})}}_{\text{(b)}}\\
        &-2\eta\underbrace{\E{(\gest \obj (u_k)-\nabla \obj (u_k))^{\top}(u_k-u_{k-\tau})}}_{\text{(c)}}\\
        &-2\eta\underbrace{\E{(\gest \obj(u_{k-\tau})-\gest \obj(u_k))^{\top}(u_k-u_{k-\tau})}}_{\text{(d)}}.&
    \end{flalign*}
    We establish upper bounds on terms (a)-(d). For (a), we have
    \begin{flalign*}
        -\E{\gest \obj (u_{k-\tau})^{\top}& (u_{k-\tau}-u^*)}\overset{\text{(s.1)}}{\leq}\E{\obj_{\delta}(u^*) - \obj_{\delta}(u_{k-\tau})}\\
        \overset{\text{(s.2)}}{\leq}&\E{\obj^* -\obj (u_{k-\tau})} + \frac{L_1}{2}\delta^2N\\
        \overset{\text{(s.3)}}{\leq}&-\frac{m}{2}\E{\|u_{k-\tau}-u^*\|^2}+ \frac{L_1}{2}\delta^2N,&
    \end{flalign*}
    where (s.1) follows from the convexity of $\obj_{\delta}$; (s.2) uses the closeness between $\obj_\delta$ and $\obj$; (s.3) holds because $\obj (u_{k-\tau}) \geq \obj^* + \frac{m}{2}\|u_{k-\tau}-u^*\|^2$. For (b), we have
    \begin{flalign*}
        -\nabla \obj (u_k)^{\top}(u_k\! - \!u_{k-\tau})\leq&\obj (u_{k-\tau})\! -\!\obj(u_k)\! - \!\frac{m}{2}\|u_{k-\tau}\!-\!u_k\|^2\\
        \overset{\text{(s.1)}}{\leq}& \tau L_0 \sqrt{R} \eta-\frac{m}{2}\|u_{k-\tau}-u_k\|^2,&
    \end{flalign*}
    where in (s.1) we rewrite $\obj(u_k) - \obj (u_{k-\tau})$ as $\obj(u_k) - \obj(u_{k-1})+\obj(u_{k-1})- \cdots + \obj (u_{k-\tau + 1}) - \obj (u_{k-\tau})$ and use the convexity of $\obj$. For (c), we have 
    \begin{flalign*}
        \E{(\gest \obj& (u_k) - \nabla \obj (u_k))^{\top}(u_k - u_{k-\tau})}\\
        \leq&\eta\E{\frac{1}{2}\|\nabla \obj_{\delta}(u_k) - \nabla \obj (u_k)\|^2\\
        &+\frac{1}{2} \|\gest \obj (u_{k-1})+e_{k-1} + \dots + \gest \obj (u_{k-\tau})+e_{k-\tau}\|^2}\\
        \leq&\frac{1}{2}\delta^2 NL_1^2 \eta + \frac{1}{2}\tau R \eta.&
    \end{flalign*}
    For (d), we have 
    \begin{flalign*}
        \E{(\gest \obj &(u_{k-\tau}) - \gest \obj (u_k))^{\top}(u_k - u_{k-\tau})}\\
        \leq&\eta\E{\norm{\gest \obj (u_{k-\tau}) - \gest \obj (u_k)}\\
        &\cdot\norm{\gest \obj (u_{k-1})+e_{k-1} + \dots + \gest \obj (u_{k-\tau})+e_{k-\tau}}}\\
        \leq&\eta\E{\norm{\gest \obj (u_{k-\tau})}^2+\norm{\gest \obj (u_k)}^2\\
        &+\frac{1}{2}\norm{\gest \obj (u_{k-1})+e_{k-1} + \dots + \gest \obj (u_{k-\tau})+e_{k-\tau}}^2}\\
        \leq&2\eta R_f + \frac{1}{2}\tau R\eta.&
    \end{flalign*}
    Hence, we can bound $\E{\norm{u_{k+1}-u^*}^2}$ by
    \begin{align}\label{eq:recursive_bd_unconstrained}
        \E{\norm{&u_{k+1}-u^*}^2}\overset{\text{(s.1)}}{\leq}\E{(1+\eta)\|u_k - u^*\|^2} \notag \\
        &- m\eta\E{(\|u_k - u^*\|^2+2(u_k-u_{k-\tau})^{\top}(u^*-u_{k-\tau}))} \notag \\
        &\!+ \!(2\tau L_0 \sqrt{R} \!+ \!\delta^2 L_1^2N \!+ \!2\tau R \!+ \!4R_f\! + \!R) \eta^2\!+ \!(R_e\!+\!L_1\delta^2N)\eta \notag \\
        \overset{\text{(s.2)}}{\leq}&(1-(m-1)\eta + m\eta^2)\E{\|u_k - u^*\|^2} + 2m\tau R\eta^3 \notag \\
        &+ (2\tau L_0 \sqrt{R} + \delta^2 L_1^2N + 2\tau R + 4R_f+ m\tau R  + R) \eta^2 \notag \\
        &+ (R_e+L_1\delta^2N)\eta,
    \end{align}
    where in (s.1) we incorporate the above upper bounds on (a)-(d). In (s.2), we bound $-\E{(u_k\!-\!u_{k\!-\!\tau})^{\top}(u^*\!-\!u_{k\!-\!\tau})}$ by
    \begin{flalign*}
        -\E{(&u_k-u_{k-\tau})^{\top}(u^*-u_{k-\tau})}\\
        \leq& \E{\|u_k\!-\!u_{k\!-\!1}\!+\!u_{k\!-\!1}-\dots \!+\!u_{k-\tau \!+\!1}\!-\!u_{k\!-\!\tau}\|\|u^*-u_{k-\tau}\|}\\
        =&\eta\E{\|\gest \obj (u_{k-1})+e_{k-1} + \dots + \gest \obj (u_{k-\tau})+e_{k-\tau}\|\\
        &\cdot\|u^*-u_k+u_k-u_{k-\tau}\|}\\
        \leq&\eta\E{(\frac{1}{2}+\eta)\|\gest \obj (u_{k-1})+e_{k-1}\\
        &+\dots + \gest \obj (u_{k-\tau})+e_{k-\tau}\|^2+\frac{1}{2}\|u^*-u_k\|^2}\\
        =&\frac{\eta}{2}\E{\|u^*-u_k\|^2} +(\frac{\eta}{2}+\eta^2)\tau R.&
    \end{flalign*}
We define $\rho=1-(m-1)\eta + m\eta^2$ and $p(\eta) = a_1\eta^3+a_2\eta^2+a_3\eta$, where
\begin{equation}
\label{bound on as}
    \begin{cases}
        a_1\! = \!2m\tau R,\\
        a_2\! = \!2\tau L_0 \sqrt{R} \!+\! \delta^2 L_1^2N \!+\! 2\tau R \!+\! 4R_f\!+\! m\tau R  \!+\! R,\\
        a_3\! = \!R_e+L_1\delta^2N.
    \end{cases}
\end{equation}
We telescope the inequality \eqref{eq:recursive_bd_unconstrained} and arrive at \eqref{eq:upper_bound_unconstrained}.
\end{proof}

\subsection{Proof for Corollary \ref{corollary one}}
\label{appendix: proof of corollary}
\begin{proof}
    From Lemma \ref{lemma 1}, we have 
    \begin{flalign*}
        \frac{p(\eta)}{1-\rho}=\frac{a_1\eta^2+a_2\eta+a_3}{(m-1)- m\eta}=\frac{a_1\eta^2+a_2\eta+R_e + L_1\delta^2N}{(m-1)- m\eta}.
    \end{flalign*}
    By invoking the parametric conditions of $\eta_\epsilon$ and $\delta$, we obtain
    \begin{flalign*}
        \frac{p(\eta_{\epsilon})}{1-\rho}=\frac{\epsilon}{2}+\frac{R_e}{(m-1)- m\eta}.
    \end{flalign*}
    If we select $\tau$ such that $\tau > \ln(\frac{(m-1-m\eta_{\epsilon})\epsilon}{2R_f(N-1)})/\ln(\lambda_2^2)$, then
    \begin{equation*}
        \frac{p(\eta_{\epsilon})}{1-\rho}<\frac{\epsilon}{2}+\frac{\epsilon}{2}<\epsilon. \qedhere
    \end{equation*}
\end{proof}

\subsection{Proof for Theorem \ref{theorem: projected dynamics}}
\label{appendix: proof of theorem 2}

\begin{proof}
We derive the following recursive inequality of the expected distance to the optimal point
\begin{flalign*}
\E{\norm{&u_{k+1}-u^*}}=\E{\norm{\text{Proj}_{\mathcal{U}}[u_k-\eta(\gest \obj(u_{k-\tau})+e_{k-\tau})]\\
&-\text{Proj}_{\mathcal{U}}[u^*-\eta(\gest \obj(u^*)+e^*)]}}\\
\stackrel{\text{(s.1)}}{\leq}&\E{\norm{u_k\!-\!u^*\!-\!\eta\nabla \obj(u_k)\!-\!\eta\nabla\obj(u^*)\!+\!\eta(\nabla \obj(u_k)\!+\!\nabla\obj(u^*)\\
&-(\gest \obj(u_{k-\tau})+e_{k-\tau})+(\gest \obj(u^*)+e^*))}}\\
\overset{\text{(s.2)}}{\leq}&\sqrt{1-2m\eta+L_1^2\eta^2}\E{\norm{u_k\!-\!u^*}}\!+\!\eta\E{\norm{\nabla \obj(u_k)\!+\!\nabla\obj(u^*)\\
&-(\gest \obj(u_{k-\tau})+e_{k-\tau})+(\gest \obj(u^*)+e^*}}\\
\overset{\text{(s.3)}}{\leq}&\sqrt{1-2m\eta+L_1^2\eta^2}\E{\norm{u_k-u^*}}+2\eta L_0\\
&+ 2\eta \sqrt{\frac{4ND_{\mathcal{U}}}{\delta^2}L_0^2\text{tr}[W^{2\tau}]+ 16L_0^2\text{tr}[W^{2\tau}](N+4)^2},&
\end{flalign*}
where (s.1) uses the non-expansiveness of the projection operator when the constraint set is convex; (s.2) follows by analyzing $\norm{u_k-u^*}^2$ using the properties of strongly convex and smooth objectives; (s.3) is because a differentiable Lipschitz function has a bounded gradient. Moreover, the upper bound on $\E{\norm{\gest \obj(u_{k-\tau})+e_{k-\tau}}}$ used in (s.3) is obtained by following similar reasoning as the proof for Lemma \ref{lemma 1} and noting that the diameter of the feasible set is $D_{\mathcal{U}}$. 
We recursively apply the above inequality. Hence, \eqref{eq:dist_opt_constrained} holds.
\end{proof}

\end{document}